\newcommand{\R}{\mathbb{R}}
\newcommand{\C}{\mathbb{C}}
\newcommand{\F}{\mathcal{F}}
\newtheorem{lemma}{Lemma}[section]
\newtheorem{proposition}{Proposition}[section]
\newtheorem{theorem}{Theorem}[section]
\numberwithin{equation}{section}
\newcommand{\eps}{\varepsilon}
\newcommand{\qtq}[1]{\quad\text{#1}\quad}
\begin{document}

\title[Dispersion-managed NLS]{Modified scattering for the cubic dispersion-managed NLS}

\author{Jason Murphy}
\address{Department of Mathematics, University of Oregon, Eugene, OR, USA.}
\email{jamu@uoregon.edu}

\author{Jiqiang Zheng}
\address{Institute of Applied Physics and Computational Mathematics and National Key Laboratory of Computational Physics, Beijing 100088, China.}
\email{zheng\_jiqiang@iapcm.ac.cn}

\maketitle 

\begin{abstract} We establish a small-data modified scattering result for the $1d$ cubic dispersion-managed NLS (with time-dependent dispersion map) for initial data in a weighted space. 
\end{abstract}

\section{Introduction}

We consider the cubic dispersion-managed nonlinear Schr\"odinger equation in one space dimension:
\begin{equation}\label{dmnls}
i\partial_t u + \gamma(t)\Delta u = - |u|^2 u,\quad u:\R_t\times\R_x^d\to\C.
\end{equation}
Here the dispersion map $\gamma$ is a 1-periodic function of time. We will focus on the physically-relevant case of a piecewise-constant dispersion map, with positive average dispersion.  In particular, we consider the 1-periodic extension of 
\begin{equation}\label{gamma1}
\gamma(t) = \begin{cases} \gamma_+ & t\in[0,\tfrac12), \\ -\gamma_- & t\in[\tfrac12,1)\end{cases}
\end{equation}
for some $\gamma_\pm>0$, with 
\begin{equation}\label{gamma2}
\langle \gamma\rangle = \int_0^1 \gamma(t)\,dt = \tfrac12[\gamma_+-\gamma_-]>0. 
\end{equation}

We remark that the analysis in this paper extends naturally to more general dispersion maps with nonzero average dispersion.  For example, the analysis applies to the class of dispersion maps treated in \cite{MVH}, which includes $1$-periodic $\gamma$ such that (i) $\langle\gamma\rangle\neq 0$, (ii) $\gamma$ and $\gamma^{-1}$ are bounded, and (iii) $\gamma$ has at most finitely many discontinuities on $[0,1]$. 

We choose the nonlinear coefficient to be negative, corresponding to a self-focusing nonlinearity.  Of course, in the small-data regime considered in this work, the sign of the nonlinearity is essentially irrelevant. 

The equation \eqref{dmnls} models the propagation of short laser pulses through fiber optic cables in the presence of dispersion management (see e.g. \cite{Hasegawa, Mitschke, TuritsynBF}).  In this setting, two different types of cable are alternated periodically, with one type corresponding to normal dispersion at the carrier frequency and the other anomalous dispersion.  

Dispersion-managed nonlinear Schr\"odinger equations (DMNLS) have received a great deal of recent interest in both the optics and mathematics communities.  For a  sample of recent works, we refer the reader to \cite{EHL, Pel, PZ, AntonelliSautSparber, CMVH, ChoiLee, ZGJT, MVH, KawakamiMurphy, CHL, HuLe, MVH0, GT1} and the references therein. 

We establish decay and modified scattering for small initial data in the weighted Sobolev space $\Sigma=\{f\in H^1: xf\in L^2\}$.  Our result parallels the well-result for the standard cubic NLS (see \cite{HN, IT, KP, LS, DZ, MurphyReview}), as well as the result for the Gabitov--Turitsyn dispersion-managed NLS \cite{MVH0}, which is essentially an averaged version of \eqref{dmnls} (cf. \cite{CHL, GT1}).  The novelty in the present work is to deal directly with the time-dependent dispersion map $\gamma(t)$. 

Our result may be stated precisely as follows:

\begin{theorem}\label{T1} Let $d=1$ and define $\gamma$ as in \eqref{gamma1}--\eqref{gamma2}. Let $u_0\in\Sigma$ satisfy $\|u_0\|_{\Sigma}=\eps>0$, and let $u:\R\times[0,\infty)\to\C$ be the unique solution to \eqref{dmnls} with $u|_{t=0}=u_0$.

If $\eps$ is sufficiently small, then $u$ satisfies the decay estimate
\[
\|u(t)\|_{L^\infty(\R)}\lesssim \eps \langle t\rangle^{-\frac12}\qtq{uniformly in}t\geq 0. 
\]
Furthermore, there exists $W\in L^\infty$ such that 
\begin{equation}\label{asymptotic}
\lim_{t\to\infty} \| u(t,x) - (2i\Gamma(t))^{-\frac12} \exp\bigl\{\tfrac{ix^2}{4\Gamma(t)}+\tfrac{i}{2\langle\gamma\rangle}|W(\tfrac{x}{2\Gamma(t)})|^2\log t\bigr\}W(\tfrac{x}{2\Gamma(t)})\|_{L_x^\infty} = 0,
\end{equation}
where $\Gamma(t):=\int_0^t \gamma(s)\,ds$ is the total dispersion.
\end{theorem}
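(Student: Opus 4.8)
The plan is to follow the now-standard Hayashi–Naumkin-type approach to modified scattering, adapted to the time-dependent dispersion map $\gamma(t)$. The first step is to conjugate away the linear flow. Writing $S(t)$ for the solution operator to $i\partial_t v + \gamma(t)\Delta v = 0$, which is a Fourier multiplier with symbol $\exp\{-i\Gamma(t)\xi^2\}$, I would set $u(t) = S(t)u_0 + \text{(Duhamel term)}$ and pass to the "profile" $v(t) := S(t)^{-1}u(t)$, so that $v$ solves $i\partial_t v = -S(t)^{-1}\bigl(|S(t)v|^2 S(t)v\bigr)$. Because $\Gamma$ is strictly increasing (by \eqref{gamma2}, $\Gamma(t)\sim\langle\gamma\rangle t$ for large $t$, though $\Gamma$ oscillates on unit scales), the operator $S(t)$ enjoys the same dispersive and $L^2$-based estimates as the free Schrödinger group with the parameter $t$ replaced by $\Gamma(t)$; in particular one has the factorization $S(t) = M_{\Gamma(t)} D_{\Gamma(t)} \mathcal{F} M_{\Gamma(t)}$ into a multiplication by $e^{ix^2/4\Gamma(t)}$, a dilation at scale $\Gamma(t)$, the Fourier transform, and another such multiplication — valid whenever $\Gamma(t)\neq 0$, hence for $t$ away from a bounded set. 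The weighted norm is controlled via the operator $J(t) := S(t)\, x\, S(t)^{-1} = x + 2i\Gamma(t)\partial_x$, which commutes with the linear part and satisfies $\|J(t)u(t)\|_{L^2} = \|x v(t)\|_{L^2}$.

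Next I would set up a bootstrap on the quantity $\|v(t)\|_{H^1} + \|xv(t)\|_{L^2} \lesssim \eps\langle t\rangle^{\delta}$ for a small $\delta>0$, together with the pointwise bound $\|u(t)\|_{L^\infty}\lesssim \eps\langle t\rangle^{-1/2}$. For short times $t\in[0,1]$ (where $\Gamma$ may vanish or be small) one simply uses local well-posedness in $\Sigma$ and the boundedness of $\gamma,\gamma^{-1}$, so all the work is for $t\geq 1$. The $L^\infty$ decay follows from the factorization of $S(t)$ together with control of $\|v\|_{H^1}$ and $\|xv\|_{L^2}$, exactly as for cubic NLS: $\|u(t)\|_{L^\infty}\lesssim |\Gamma(t)|^{-1/2}\|\hat v(t)\|_{L^\infty} + |\Gamma(t)|^{-3/4}\|xv(t)\|_{L^2}^{1/2}\|v\|^{1/2}_{...}$, and $|\Gamma(t)|\sim t$. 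For the growth of the weighted and $H^1$ norms, the key point is that after applying $J(t)$ (or $\partial_x$) to the Duhamel term and using the Leibniz-type rule $J(t)(|u|^2u) = 2|u|^2 J(t)u - u^2\overline{J(t)u} + (\text{lower order})$, one estimates the nonlinear term by Hölder and the dispersive decay, picking up an integrable-in-time factor $\langle t\rangle^{-1}$ from $\|u\|_{L^\infty}^2$ against the slow growth $\langle t\rangle^{\delta}$.

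The heart of the matter is the asymptotics \eqref{asymptotic}. Passing to $\hat v(t,\xi)$ and inserting the factorization, the Duhamel equation for the profile becomes, to leading order,
\begin{equation*}
i\partial_t \hat v(t,\xi) = \frac{1}{2\Gamma(t)}|\hat v(t,\xi)|^2\hat v(t,\xi) + \mathcal{E}(t,\xi),
\end{equation*}
where the nonresonant remainder $\mathcal{E}$ comes from the phase $\exp\{i\frac{(\xi-\eta)(\xi-\zeta)}{\cdots}\}$-type oscillation in the trilinear integral and is shown to be integrable in time in $L^\infty_\xi$ (using non-stationary phase / integration by parts in the frequency variables, paying with $\|xv\|_{L^2}$-type norms and the extra $\langle t\rangle^{\delta}$-room). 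Since $\int_0^t \frac{ds}{2\Gamma(s)} \sim \frac{1}{2\langle\gamma\rangle}\log t$ — here one must handle the oscillation of $\frac1{\Gamma(s)}$ around its average $\frac1{\langle\gamma\rangle s}$, writing $\frac1{2\Gamma(s)} = \frac1{2\langle\gamma\rangle s} + (\text{a function with absolutely convergent integral, up to a }\log\text{-bounded piece})$ — the phase $|\hat v|^2$ being conserved along the ODE (its modulus is unchanged by the leading term), one gets that $\hat v(t,\xi)\exp\{-\frac{i}{2\langle\gamma\rangle}|\hat v(t,\xi)|^2\log t\}$ converges in $L^\infty_\xi$ to some limit, which one names $W(\xi)$. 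Undoing the factorization $u(t) = M_{\Gamma(t)}D_{\Gamma(t)}\mathcal{F}M_{\Gamma(t)}v(t) = (2i\Gamma(t))^{-1/2}e^{ix^2/4\Gamma(t)}\hat v(t, \tfrac{x}{2\Gamma(t)}) + o_{L^\infty}(1)$, and replacing $|\hat v(t,\tfrac{x}{2\Gamma(t)})|^2$ by $|W(\tfrac{x}{2\Gamma(t)})|^2$ at the cost of another vanishing error, yields precisely \eqref{asymptotic}.

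The main obstacle I anticipate is dealing cleanly with the \emph{oscillation of $\Gamma(t)$ on unit time scales} — it is not monotone, it has a genuinely periodic fluctuation on top of the linear growth $\langle\gamma\rangle t$, and it is governed by a merely piecewise-constant (hence discontinuous) $\gamma$. This enters in three places: (a) ensuring the factorization and dispersive estimates for $S(t)$ hold uniformly once $|\Gamma(t)|\gtrsim 1$, with constants not blowing up as $\Gamma$ dips toward its local minima; (b) controlling $\int^t \frac{ds}{2\Gamma(s)}$ and extracting exactly the coefficient $\frac{1}{2\langle\gamma\rangle}$ with an error that stays bounded (not just $o(\log t)$) so that the limit $W$ genuinely exists; and (c) making sure the non-stationary phase estimates on $\mathcal{E}$ tolerate the discontinuities of $\gamma$ (so one integrates by parts in $\xi,\eta$, never in $t$ across a jump, or handles the jump times as a measure-zero set). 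Once these $\gamma$-specific points are absorbed — essentially by replacing "$t$" by "$\Gamma(t)$" everywhere and quantifying the comparison $\Gamma(t)\sim\langle\gamma\rangle t$ — the argument runs parallel to the cubic NLS case and to \cite{MVH0}.
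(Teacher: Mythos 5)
Your plan is sound and would yield the theorem, but it takes a genuinely different route from the paper at the decisive step. You reduce matters to an ODE for the Fourier transform of the profile $\hat v$, with the nonresonant remainder $\mathcal{E}$ controlled by integration by parts in the frequency variables of the trilinear integral (the Hayashi--Naumkin/Kato--Pusateri route). The paper instead follows Lindblad--Soffer: it sets $u=M(\Gamma(t))D(\Gamma(t))w$, so that $w$ solves $i\partial_t w+[2\Gamma(t)]^{-2}\gamma(t)\Delta w=-[2\Gamma(t)]^{-1}|w|^2w$ in physical space, truncates to frequencies $\leq\sqrt t$, and removes the non-integrable cubic term with the unimodular integrating factor $B(t)=\exp\{-i\int_{T_0}^t|\tilde w|^2\,\tfrac{ds}{2\Gamma(s)}\}$; every error term is then estimated by Bernstein inequalities alone, with no oscillatory-integral analysis. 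The paper's route is thus more elementary, and your anticipated obstacle (c) --- integration by parts in the presence of the discontinuous $\gamma$ --- simply never arises there (it is also harmless in your route, since one integrates by parts only in $\eta,\zeta$ and only the continuous function $\Gamma(s)$ enters the phase). Your obstacles (a) and (b) are resolved in the paper exactly as you propose: one fixes $T_0$ with $\Gamma(t)\geq\tfrac12\langle\gamma\rangle t$ for $t\geq T_0$ using $|\Gamma(t)-\langle\gamma\rangle t|\leq 2\|\gamma\|_{L^\infty}$ (cf. \eqref{Gammalemma}--\eqref{Gammalemma2}), and the same bound makes $\tfrac{1}{2\Gamma(s)}-\tfrac{1}{2\langle\gamma\rangle s}=O(s^{-2})$ absolutely integrable, which pins down the coefficient $\tfrac{1}{2\langle\gamma\rangle}$ and the existence of $W$. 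One caution on your route: a naive stationary-phase expansion of the resonant term would require two weighted derivatives of $v$, which the $\Sigma$-norm does not supply; you must perform only a single integration by parts, paying $\|xv\|_{L^2}\lesssim\eps\langle t\rangle^{\delta}$, as your sketch indeed indicates. Also mind the sign bookkeeping: with the focusing sign in \eqref{dmnls}, the leading ODE carries a minus sign relative to what you wrote, which is what produces the $+$ sign in the exponent in \eqref{asymptotic}.
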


Theorem~\ref{T1} describes the long-time behavior of solutions as $t\to\infty$.  Repeating the arguments yields a similar result for $t\to-\infty$.  We remark that the global existence of solutions is relatively straightforward to obtain (see Proposition~\ref{P:GWP1d}).  In particular, the decay and asymptotic behavior are the main points of the theorem. 

To prove Theorem~\ref{T1}, we will make use of the vector field
\[
J_\Gamma(t,t_0) = x+2i\Gamma(t,t_0)\nabla,\quad \Gamma(t,t_0):=\int_{t_0}^t \gamma(s)\,ds,
\]
which is the generalization of the Galilean vector field $J(t)=x+2it\nabla$ used in the scattering theory for the standard NLS (see e.g. \cite{Cazenave} for a textbook treatment or \cite{MurphyReview0, MurphyReview} for some reviews).  In fact, $J(t)$ has been used effectively in the scattering theory for the Gabitov--Turitsyn dispersion-managed NLS, as well (see e.g. \cite{KawakamiMurphy, MVH0}).  This reflects the fact that these models have a Galilean symmetry, and it is for the same reason that this vector field can be used effectively in the present setting. 

To prove Theorem~\ref{T1}, we will adapt some of the arguments used to establish small-data modified scattering for the standard NLS.  In particular we will rely on a factorization of the propagator $e^{i\Gamma(t)\Delta}$ in the spirit of \cite{HN}, as well as a change of variables in the spirit of \cite{LS} (see also \cite[Section~5]{MurphyReview}).  As in all of the proofs of modified scattering for cubic NLS, the core of the proof is a bootstrap argument relating an `energy' norm (involving $\|J_\Gamma(t)u(t)\|_{L^2}$) and a `dispersive' norm (involving $\|u(t)\|_{L^\infty}$).  We control the energy norm using a chain-rule estimate for $J_\Gamma$ and Gronwall's inequality, while the dispersive norm is estimated by introducing a new variable, denoted $w$ in this work, and employing an integrating factor in the equation for $w$.  After closing the estimates, the asymptotic behavior is established by further analyzing the equation for $w$. 

To close this introduction, let us compare the asymptotic formula appearing in \eqref{asymptotic} with those arising in the case of the standard cubic NLS and Gabitov--Turitsyn NLS.  We can then compare with the standard cubic NLS
\begin{equation}\label{standard}
i\partial_t u + \langle\gamma\rangle\Delta u = -|u|^2 u,
\end{equation}
as well as the Gabitov--Turitsyn NLS, which may be expressed in this setting as
\begin{equation}\label{GT}
i\partial_t u + \langle\gamma\rangle\Delta u = -\int_0^1 e^{-iD(\tau)\Delta}\{|e^{iD(\tau)\Delta}u|^2 e^{iD(\tau)\Delta}u\}\,d\tau, 
\end{equation}
with  $D(\tau):=\Gamma(\tau)-\langle\gamma\rangle\tau$. In both cases, the long-time behavior of solutions is given by
\begin{equation}\label{asymptotic2}
u(t,x) = (2i\langle\gamma\rangle t)^{-\frac12}\exp\{ \tfrac{ix^2}{4\langle\gamma\rangle t}+\tfrac{i}{2\langle\gamma\rangle} |W(\tfrac{x}{2\langle\gamma\rangle t})|^2\log t\} W(\tfrac{x}{2\langle\gamma\rangle t})+o(t^{-\frac12})
\end{equation}
as $t\to\infty$ for some profile $W$ (cf. \cite{HN, IT, KP, LS, DZ, MurphyReview, MVH0}). 

As a matter of fact, the long-time behavior in \eqref{asymptotic2} agrees exactly with that in \eqref{asymptotic}.  Indeed, if we were to denote the total dispersion $\langle \gamma\rangle t$ in \eqref{asymptotic2} by $\Gamma(t)$ (as we do for \eqref{dmnls}), the formulas would become identical. Put differently, \eqref{asymptotic2} is exactly what appears in \eqref{asymptotic} if we specialize to the case $\gamma(t)\equiv \langle\gamma\rangle$.

\subsection*{Acknowledgements} J. M. was supported by NSF DMS-2350225. 
J. Z. was supported by National key R\&D program of China: 2021YFA1002500 and NSF grant of China (No. 12271051, 12426502). We are grateful to Yanfang Gao, who hosted us at Fujian Normal University while part of this work was completed. 

\section{Preliminaries}

We use the standard $\lesssim$ notation, i.e. we write $A\lesssim B$ to denote $A\leq CB$ for some $C>0$.  We indicate dependence of the constant on parameters via subscripts, e.g. $A\lesssim_T B$ denotes $A\leq CB$ for some $C=C(T)$. If $A\lesssim B$ and $B\lesssim A$, we write $A\sim B$. 

We define the Fourier transform on $\R$ via
\[
\hat f(\xi) = (2\pi)^{-\frac{1}{2}}\int_{\R} e^{-ix\cdot\xi}f(x)\,dx. 
\]

We employ the standard Littlewood--Paley frequency projections, denoted by $P_N$, $P_{\leq N}$, $P_{>N}$.  Here $P_N$ is the Fourier multiplier operator $\F^{-1} \psi(\tfrac{\xi}{N}) \F$, with $\psi$ a smooth cutoff to frequencies $|\xi|\sim 1$.  Similarly, $P_{\leq N}$ restricts to frequencies $|\xi|\lesssim N$ and $P_{>N}$ restricts to frequencies $|\xi|\gtrsim N$.  These operators are bounded on all $L^p$ spaces and obey the following standard estimates, which may be proven via Young's convolution inequality:
\begin{lemma}[Bernstein estimates]\label{L:Bernstein} For $1\leq p\leq q\leq\infty$ and $s\in\R$:
\[
\|P_N f\|_{L^q(\R)} \lesssim N^{\frac{1}{p}-\frac{1}{q}}\|P_N f\|_{L^p(\R)},\quad \||\nabla|^s P_N f\|_{L^p(\R)}\sim_s N^s \|P_N f\|_{L^p(\R)}. 
\]
\end{lemma}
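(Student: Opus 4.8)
The plan is to realize each projection as a convolution and reduce everything to Young's convolution inequality, tracking the powers of $N$ by scaling. Write $P_N f=\F^{-1}[\psi(\cdot/N)\,\hat f]$, so that $P_N f=c\,K_N*f$ with $K_N=\F^{-1}[\psi(\cdot/N)]$ and $c$ a harmless constant from the Fourier convention. A change of variables in the inverse transform gives the scaling relation $K_N(x)=N\,K_1(Nx)$, where $K_1=\F^{-1}\psi$ is Schwartz (as $\psi$ is smooth and compactly supported). Consequently, for any $1\le r\le\infty$,
\[
\|K_N\|_{L^r(\R)}=N^{1-\frac1r}\|K_1\|_{L^r(\R)},
\]
a finite quantity depending only on $r$. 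The same scaling will be applied below to more general symbols.

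For the first estimate, fix a fattened cutoff $\tilde\psi$ with $\tilde\psi\equiv 1$ on $\supp\psi$ and $\supp\tilde\psi\subset\{|\xi|\sim1\}$, and let $\tilde P_N=\F^{-1}\tilde\psi(\cdot/N)\F$ with kernel $\tilde K_N$. Then $P_N=\tilde P_N P_N$, so $P_N f=c\,\tilde K_N*(P_N f)$. Applying Young's inequality $\|g*h\|_{L^q}\le\|g\|_{L^r}\|h\|_{L^p}$ with $1+\frac1q=\frac1r+\frac1p$ (so that $1-\frac1r=\frac1p-\frac1q$, and $r\in[1,\infty]$ since $p\le q$) yields $\|P_N f\|_{L^q}\lesssim\|\tilde K_N\|_{L^r}\|P_N f\|_{L^p}$. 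The scaling relation then gives $\|\tilde K_N\|_{L^r}=N^{1-\frac1r}\|\tilde K_1\|_{L^r}=N^{\frac1p-\frac1q}\|\tilde K_1\|_{L^r}$, which is precisely the claimed Bernstein inequality.

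For the second estimate I would argue in both directions with $r=1$. Using $P_N=\tilde P_N P_N$, we have $|\nabla|^s P_N f=|\nabla|^s\tilde P_N(P_N f)$, which is (up to a constant) the convolution of $P_N f$ with $\F^{-1}m_N$, where $m_N(\xi)=|\xi|^s\tilde\psi(\xi/N)$; conversely, $P_N f$ is the convolution of $|\nabla|^s P_N f$ with $\F^{-1}n_N$, where $n_N(\xi)=|\xi|^{-s}\tilde\psi(\xi/N)$, the identity $\tilde\psi\equiv1$ on $\supp\psi$ ensuring that $n_N(\xi)\,|\xi|^s\psi(\xi/N)=\psi(\xi/N)$. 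The crucial point is that, because $\tilde\psi$ is supported in the annulus $|\xi|\sim1$ away from the origin, the functions $|\xi|^{\pm s}\tilde\psi(\xi)$ are smooth and compactly supported, hence their inverse transforms lie in $L^1$; the scaling computation then gives $\|\F^{-1}m_N\|_{L^1}\lesssim_s N^s$ and $\|\F^{-1}n_N\|_{L^1}\lesssim_s N^{-s}$. Young's inequality with $r=1$ now bounds both $\||\nabla|^s P_N f\|_{L^p}\lesssim_s N^s\|P_N f\|_{L^p}$ and $\|P_N f\|_{L^p}\lesssim_s N^{-s}\||\nabla|^s P_N f\|_{L^p}$, which together give the asserted equivalence.

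The estimates are elementary, and the only genuine care required is the bookkeeping of the scaling exponents and the observation that excising the origin via $\tilde\psi$ turns $|\xi|^{\pm s}$ into a smooth, compactly supported symbol with an $L^1$ inverse transform. No step presents a serious obstacle.
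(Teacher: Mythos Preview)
Your proof is correct and follows exactly the approach the paper indicates: the paper does not spell out a proof but simply states that these standard estimates ``may be proven via Young's convolution inequality,'' which is precisely what you do, with the scaling bookkeeping and the fattened cutoff $\tilde\psi$ handled carefully. Nothing more is needed.
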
 
Here $|\nabla|^s$ is the fractional derivative defined as the Fourier multiplier operator $\F^{-1} |\xi|^s \F$.

The free Schr\"odinger propagator $e^{it\Delta}$ may be defined as the Fourier multiplier operator $\F^{-1} e^{-it\xi^2} \F$.  We will make use of the following factorization identity, which follows from direct calculation:
\begin{equation}\label{factorization}
e^{it\Delta} = M(t) D(t) \F M(t),
\end{equation}
where $M(t)=e^{\frac{ix^2}{4t}}$ and $D(t)$ is the dilation operator
\[
[D(t)f](x) = (2it)^{-\frac{1}{2}}f(\tfrac{x}{2t}). 
\]

In particular (recalling $\Gamma(t)=\Gamma(t,0)=\int_0^t \gamma(s)\,ds$), we have the identity
\begin{equation}\label{factorization}
e^{i\Gamma(t)\Delta} = M(\Gamma(t)) D(\Gamma(t))\F M(\Gamma(t)),
\end{equation}
which we will only use on intervals on which $\Gamma(t)>0$. 

We note that for the class of dispersion maps under consideration in this paper, we have the following basic but useful estimate concerning the total dispersion (see \cite[Lemma~1]{MVH}):
\begin{equation}\label{Gammalemma}
|\Gamma(t) - t\langle\gamma\rangle| \leq 2\|\gamma\|_{L^\infty}
\end{equation}
for all $t\in\R$.  In particular, we have that $\Gamma(t)>0$ for all $t>\tfrac{2\|\gamma\|_{L^\infty}}{\langle\gamma\rangle}$.  In fact, we have 
\begin{equation}\label{Gammalemma2}
\Gamma(t)\geq \tfrac12\langle \gamma\rangle t\qtq{for all}t\geq T_0:=\tfrac{4\|\gamma\|_{L^\infty}}{\langle\gamma\rangle}.
\end{equation}

We next define the vector field $J_\Gamma(t,t_0)$ and observe two useful representations of this operator: 
\begin{equation}\label{J-identities}
\begin{aligned}
J_\Gamma(t,t_0) =x+2i\Gamma(t,t_0)\nabla &= e^{i\Gamma(t,t_0)\Delta} x e^{-i\Gamma(t,t_0)\Delta} \\
& = e^{\frac{i|x|^2}{4\Gamma(t,t_0)}}[2i\Gamma(t,t_0)\nabla] e^{-\frac{i|x|^2}{4\Gamma(t,t_0)}}. 
\end{aligned}
\end{equation}

The vector field $J_\Gamma$ obeys the following commutation property with the free propagator:
\[
J_\Gamma(t,t_0)e^{i\Gamma(t,s)\Delta} = e^{i\Gamma(t,s)\Delta}J_\Gamma(s,t_0),
\]
which is readily derived from the first representation on the right-hand side of \eqref{J-identities}. By direct calculation, we also have the following pointwise chain rule estimate:
\begin{equation}\label{chain}
|J_\Gamma(t,t_0)[|u|^2 u]| \lesssim |u|^2 |J_\Gamma(t,t_0)u|,
\end{equation}
which will be used several times in what follows. 

Just as we abbreviate $\Gamma(t,0)$ by $\Gamma(t)$, we will abbreviate $J_\Gamma(t,0)$ by $J_\Gamma(t)$.

\section{Modified scattering}\label{S:main}

In this section we prove the main theorem, Theorem~\ref{T1}. To begin, we obtain global existence for \eqref{dmnls} with data in $\Sigma$.

\begin{proposition}\label{P:GWP1d} For any $u_0\in L^2(\R)$ and $t_0\in\R$, there exists a unique global solution $u:\R\times\R\to\C$ to \eqref{dmnls} with $u|_{t=t_0}=u_0$.  In addition, if $u_0\in \Sigma$, then $u\in C(\R;\Sigma)$. 
\end{proposition}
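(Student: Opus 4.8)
The plan is to treat \eqref{dmnls} as a perturbation of the free (time-dependent) Schrödinger evolution and run a standard contraction-mapping/energy argument, using the conservation of mass to upgrade local existence to global existence. First I would record the Duhamel formula: writing $U(t,t_0) = e^{i\Gamma(t,t_0)\Delta}$ for the propagator associated with $i\partial_t + \gamma(t)\Delta$ (which is well-defined since $\Gamma(t,t_0)$ is a continuous, piecewise-linear function of $t$ and $\gamma^{-1}$ is bounded), a solution to \eqref{dmnls} with $u|_{t=t_0}=u_0$ satisfies
\[
u(t) = U(t,t_0)u_0 + i\int_{t_0}^t U(t,s)[|u(s)|^2u(s)]\,ds.
\]
Since $U(t,s)$ is unitary on $L^2$ and on $H^1$ (it is a Fourier multiplier of modulus one, and commutes with $\nabla$), and since in $d=1$ we have the Sobolev embedding $H^1(\R)\hookrightarrow L^\infty(\R)$ giving the algebra-type estimate $\||u|^2u\|_{H^1}\lesssim \|u\|_{H^1}^3$, a routine fixed-point argument in $C([t_0-T,t_0+T];H^1)$ produces a unique local solution on a time interval whose length $T$ depends only on $\|u_0\|_{H^1}$. (One could equally well work first in $L^2$ via Strichartz estimates for $U(t,s)$ — these follow from the dispersive decay of $e^{it\Delta}$ together with \eqref{Gammalemma} — but the $H^1$ subcritical argument is cleaner here and immediately gives uniqueness in the stated class.)

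Next I would globalize. Multiplying \eqref{dmnls} by $\bar u$, integrating in $x$, and taking imaginary parts gives conservation of mass, $\|u(t)\|_{L^2} = \|u_0\|_{L^2}$, exactly as for the standard NLS (the real coefficient $\gamma(t)$ does not affect this computation). For the $H^1$ bound one differentiates the equation, or more simply uses the energy $E(t) = \tfrac12\|\nabla u(t)\|_{L^2}^2 - \tfrac14\|u(t)\|_{L^4}^4$; because $\gamma$ is time-dependent, $E$ is not exactly conserved, but $\tfrac{d}{dt}E(t) = -\tfrac{\dot\gamma(t)}{2\gamma(t)}\,\cdot(\text{something})$ — more carefully, the natural conserved-up-to-$\gamma$ quantity is $\tfrac12\gamma(t)\|\nabla u\|_{L^2}^2 - \tfrac14\|u\|_{L^4}^4$ with derivative $\tfrac12\dot\gamma(t)\|\nabla u\|_{L^2}^2$. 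Rather than chase the focusing sign, in the small-data setting of the theorem it is cleanest to observe that $\|\nabla u(t)\|_{L^2}$ satisfies, via the Duhamel formula, the inequality $\|\nabla u(t)\|_{L^2} \lesssim \|\nabla u_0\|_{L^2} + \int_{t_0}^t \|u(s)\|_{L^\infty}^2\|\nabla u(s)\|_{L^2}\,ds$; combined with mass conservation and Gronwall this yields an a priori bound on $\|u(t)\|_{H^1}$ on any finite interval, which together with the uniform local existence time rules out finite-time blow-up and gives the global solution $u\in C(\R;H^1)$. (For Proposition~\ref{P:GWP1d} as stated one only needs global existence in $L^2$; but the $H^1$ theory is what feeds into the rest of Section~\ref{S:main}, so it is worth establishing here. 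For general $L^2$ data with no smallness, the same $L^2$ Strichartz contraction plus mass conservation gives the global $L^2$ solution, since the cubic NLS in $d=1$ is $L^2$-subcritical.)

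Finally, for the persistence of the weighted regularity, suppose $u_0\in\Sigma$, i.e. additionally $xu_0\in L^2$. Here I would use the vector field $J_\Gamma(t,t_0)=x+2i\Gamma(t,t_0)\nabla$, which by \eqref{J-identities} satisfies $J_\Gamma(t,t_0) = U(t,t_0)\,x\,U(t,t_0)^{-1}$ and hence commutes with the propagator. Applying $J_\Gamma(t,t_0)$ to the Duhamel formula and using the chain-rule estimate \eqref{chain}, $|J_\Gamma[|u|^2u]|\lesssim |u|^2|J_\Gamma u|$, together with the unitarity of $U(t,s)$ on $L^2$, gives
\[
\|J_\Gamma(t,t_0)u(t)\|_{L^2} \lesssim \|xu_0\|_{L^2} + \int_{t_0}^t \|u(s)\|_{L^\infty}^2\,\|J_\Gamma(s,t_0)u(s)\|_{L^2}\,ds,
\]
and Gronwall's inequality (using the already-established local-in-time bound on $\|u(s)\|_{L^\infty}\lesssim\|u(s)\|_{H^1}$) bounds $\|J_\Gamma(t,t_0)u(t)\|_{L^2}$ on any finite interval. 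Since $\|xu(t)\|_{L^2}\leq \|J_\Gamma(t,t_0)u(t)\|_{L^2} + 2|\Gamma(t,t_0)|\,\|\nabla u(t)\|_{L^2}$ and both terms on the right are finite, we get $xu(t)\in L^2$ with locally bounded norm, and continuity in $t$ follows from the Duhamel formula and dominated convergence. Hence $u\in C(\R;\Sigma)$.

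The main obstacle is really just bookkeeping around the time-dependent coefficient: one must make sure the propagator $U(t,t_0)$, defined through the total dispersion $\Gamma(t,t_0)$, genuinely solves $i\partial_t+\gamma(t)\Delta$ (it does, because $\partial_t\Gamma(t,t_0)=\gamma(t)$ a.e. and $\Gamma$ is Lipschitz) and behaves exactly like $e^{it\Delta}$ for the purposes of Strichartz estimates — which is legitimate since $\Gamma$ is bi-Lipschitz in $t$ on account of $\gamma,\gamma^{-1}\in L^\infty$, so a change of variables $s\mapsto\Gamma(s)$ transfers the dispersive and Strichartz bounds from $e^{it\Delta}$. Once that is in place, every remaining step is the standard cubic-NLS globalization argument.
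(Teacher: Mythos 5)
Your overall architecture matches the paper's: Duhamel formula for the propagator $e^{i\Gamma(t,s)\Delta}$, local theory plus mass conservation for globalization, and then commutation of $\nabla$ and of $J_\Gamma(t,t_0)$ with the propagator together with the chain-rule estimate \eqref{chain} and Gronwall for persistence of $\Sigma$-regularity; the last part in particular is essentially identical to the paper's argument. However, there is a genuine gap in your justification of the linear estimates. You claim that Strichartz and dispersive bounds for $e^{i\Gamma(t,s)\Delta}$ transfer from $e^{it\Delta}$ via the change of variables $s\mapsto\Gamma(s)$, ``since $\Gamma$ is bi-Lipschitz on account of $\gamma,\gamma^{-1}\in L^\infty$.'' This is false: $\gamma$ equals $-\gamma_-<0$ on half of each period, so $\Gamma$ is not monotone, hence not injective, hence not bi-Lipschitz. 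Concretely, there exist $t\neq s$ with $\Gamma(t,s)=0$ (take $s$ close enough to $\tfrac12$ and $t$ in the following anomalous-dispersion window), for which $e^{i\Gamma(t,s)\Delta}$ is the identity and the $|t-s|^{-1/2}$ dispersive bound fails outright; the $TT^*$ argument therefore does not follow by a change of variables. This is precisely the difficulty resolved in \cite{MVH}, which the paper cites for the Strichartz estimates (and which loses the double $L_t^2$ endpoint in $d\geq 3$ for exactly this reason). Since the proposition is stated for arbitrary $u_0\in L^2$, you genuinely need the Strichartz-based $L^2$ theory --- your $H^1$ fixed point does not cover that case --- so this step cannot be waved through.

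A second, related soft spot is the globalization of the $H^1$ bound. The inequality $\|\nabla u(t)\|_{L^2}\lesssim\|\nabla u_0\|_{L^2}+\int_{t_0}^t\|u(s)\|_{L^\infty}^2\|\nabla u(s)\|_{L^2}\,ds$ ``combined with mass conservation and Gronwall'' is not a linear Gronwall inequality: with only mass conservation and Sobolev or Gagliardo--Nirenberg, $\|u\|_{L^\infty}^2$ is itself controlled by a positive power of $\|\nabla u\|_{L^2}$, and the resulting integral inequality is superlinear (compare $y'=y^{3/2}$), so it does not by itself rule out finite-time blow-up of the $H^1$ norm. The standard repair --- and what makes the paper's brief treatment legitimate --- is to run the $L^2$ local theory on intervals whose length depends only on the conserved mass, harvest an a priori space-time bound such as $\|u\|_{L_t^4L_x^\infty}$ on each such interval, and then apply linear Gronwall to $\|\nabla u\|_{L^2}$ (and to $\|J_\Gamma(t,t_0)u\|_{L^2}$) with the now-integrable weight $\|u(s)\|_{L^\infty}^2$. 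So the Strichartz input is needed in two places, and both currently rest on the incorrect bi-Lipschitz claim; replacing that claim with a citation of \cite{MVH} (or an actual proof of the Strichartz estimates for sign-changing $\gamma$) repairs the argument.
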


\begin{proof} Let $u_0\in L^2$.  We will first construct a solution to the Duhamel formula
\[
u(t) = e^{i\Gamma(t,t_0)\Delta}u_0+i\int_{t_0}^t e^{i\Gamma(t,s)\Delta}|u|^2 u(s)\,ds. 
\]
on an interval $[t_0,t_0+T]$, where $T=T(\|u_0\|_{L^2})$. 
As in the case of the standard NLS, the key tool we need is the set of Strichartz estimates for the underlying linear equation.  For the particular type of dispersion map under consideration here, the work \cite{MVH} proved that the dispersion-managed equation admits essentially the same set of Strichartz estimates as the usual linear Schr\"odinger equation (only the double $L_t^2$-endpoint in dimensions $d\geq 3$ was not obtained).  In light of this fact, local well-posedness for \eqref{dmnls} in $L^2$ follows from the same arguments used to handle the standard NLS (see e.g. \cite{Cazenave} for a textbook treatment). 

Using the conservation of mass, we can iterate to obtain global existence in $L^2$.  It remains to show that if $u_0\in\Sigma$, then $u\in C(\R;\Sigma)$.  As spatial derivatives commute with the linear part of the equation, it is straightforward to check that $u_0\in H^1$ yields $u\in C(\R;H^1)$.  To prove that $u_0\in H^{0,1}$ leads to $u\in C(\R;H^{0,1})$, we use the vector field 
\[
J_\Gamma(t,t_0)=x+2i\Gamma(t,t_0)\nabla
\]
and observe that it suffices to estimate locally in time.  We focus on the inhomogeneous term in the Duhamel formula and use the commutation properties of $J_\Gamma$ and the chain rule estimate \eqref{chain} for $J_\Gamma$ to estimate on a short interval $[t_0,t_0+T]$:
\begin{align*}
\biggl\| & J_\Gamma(t,t_0)\int_{t_0}^t e^{i\Gamma(t,s)\Delta}|u|^2 u(s)\,ds\biggr\|_{L_t^\infty L_x^2} \\
& \lesssim \biggl\|\int_{t_0}^t e^{i\Gamma(t,s)\Delta}J_\Gamma(s,t_0)[ |u|^2 u(s)]\,ds\biggr\|_{L_t^\infty L_x^2} \\
& \lesssim T\|u\|_{L_{t,x}^\infty}^2 \|J_{\Gamma}(t,t_0)u\|_{L_t^\infty L_x^2} \\
& \lesssim T \|u\|_{L_t^\infty H_x^1}^2 \|J_{\Gamma}(t,t_0)u\|_{L_t^\infty L_x^2}.
\end{align*}
Estimating in this way, we may obtain $J_\Gamma(t,t_0)u\in C_t L_x^2$.  By the continuity in $H_x^1$ and the triangle inequality, this yields finally yields continuity of $xu$ in $L_x^2$. 
\end{proof}

We now let $u_0\in\Sigma$ with $\|u_0\|_{\Sigma}=\eps$, with $0<\eps\ll1$, and we take $u:[0,\infty)\times\R\to\C$ to be the corresponding solution to \eqref{dmnls} provided by Proposition~\ref{P:GWP1d}.  The proof of Theorem~\ref{T1} will rely on a bootstrap argument that controls two norms of the solution, which we denote by
\[
\|u(t)\|_X := \sup_{s\in[0,t]} \bigl\{\|u(s)\|_{L_x^2}+\langle s\rangle^{-\delta}\|J_\Gamma(s)u(s)\|_{L_x^2}+ \langle s\rangle^{-\delta}\|\nabla u(s)\|_{L_x^2}\}
\]
for some small $\delta>0$ to be specified below, and
\[
\|u(t)\|_S := \sup_{s\in[0,t]}\langle s\rangle^{\frac12}\|u(s)\|_{L_x^\infty}.
\]
The main decay estimate appearing in the proof of Theorem~\ref{T1} is equivalent to the statement that $\|u(t)\|_S\lesssim \eps$ uniformly in $t\geq 0$. 

Using conservation of mass, the $L^2$-component of the $X$-norm is \emph{a priori} controlled by $\eps$.\footnote{In contrast to the standard NLS, there is no globally conserved energy for \eqref{dmnls} that we can use to obtain \emph{a priori} $\dot H^1$ control over the solution.}  Using the local theory, we can also obtain suitable estimates on any fixed finite time interval.  Indeed, given $T_0>0$, we may use the local theory and the $1d$ Sobolev embedding $H^1\subset L^\infty$ to obtain
\[
\sup_{t\in[0,T_0]} \{ \|u(t)\|_X + \|u(t)\|_S\} \lesssim_{T_0} \eps. 
\]
In what follows, we will fix $T_0=T_0(\gamma)>0$ large enough that $\Gamma(t)\geq \tfrac12\langle\gamma\rangle t$ for all $t\geq T_0$, where $\langle\gamma\rangle>0$ is the average dispersion.  That this is possible follows from the fact that $|\Gamma(t)-\langle \gamma \rangle t| \lesssim 1$ uniformly in $t$ (cf. \eqref{Gammalemma}--\eqref{Gammalemma2} above).

It is straightforward to see that control of the $S$-norm yields control over the $J_\Gamma$ and $\nabla$ components of the $X$-norm.  In particular, using the Duhamel formula, the commutation properties of $J_\Gamma(t)$ with $e^{i\Gamma(t)\Delta}$, and the chain rule \eqref{chain} for $J_\Gamma$ we have
\begin{align*}
\|J_\Gamma(t) u(t)\|_{L^2} & \lesssim \|xu_0\|_{L_x^2} + \int_0^t \| J_\Gamma(s)[|u|^2 u]\|_{L_x^2}\,ds \\
& \lesssim \eps + \int_0^t \langle s\rangle^{-1}\|u(s)\|_S^2 \|J_\Gamma(s)u(s)\|_{L_x^2}\,ds,
\end{align*}
which implies (via Gronwall's inequality) that
\begin{equation}\label{energy-type1}
\|J_\Gamma(t)u(t)\|_{L^2} \lesssim \eps\langle t\rangle^{C\|u(t)\|_S^2}
\end{equation}
for some absolute $C>0$ and all $t>0$.  We can repeat this argument with the gradient to obtain
\begin{equation}\label{energy-type2}
\|\nabla u(t)\|_{L^2} \lesssim \eps\langle t\rangle^{C\|u(t)\|_S^2}.
\end{equation}
Thus we find that (for $\eps$ sufficiently small), we may derive
\[
\|u(t)\|_S \lesssim \eps \implies \|u(t)\|_X \lesssim \eps, 
\]
which is the first half of the bootstrap argument.

To complete the bootstrap argument, we need to show the converse, namely, that control over the $X$-norm implies control over the $S$-norm.  To this end, we recall the notation defined after \eqref{factorization} and introduce a new variable $w$ on the interval $[T_0,\infty)$ via 
\begin{equation}\label{def-w}
u(t) = M(\Gamma(t))D(\Gamma(t))w(t). 
\end{equation}
By direct computation (using \eqref{dmnls}), it follows that $w$ solves the equation
\[
i\partial_t w +[2\Gamma(t)]^{-2}\gamma(t)\Delta w = - |2\Gamma(t)|^{-1}|w|^2 w. 
\]

We note also that (since $\Gamma(t)\sim t$)
\[
\|w(t)\|_{L^2}\equiv\|u(t)\|_{L^2} \qtq{and} \|w(t)\|_{L^\infty}\sim t^{\frac12}\|u(t)\|_{L^\infty}.
\]
Furthermore, using \eqref{J-identities}, we have
\[
\|\nabla w(t)\|_{L^2} = \|J_\Gamma(t) u(t)\|_{L^2}\lesssim t^{\delta}\|u(t)\|_X.
\]
We also observe that (as $0<\delta\ll 1$)
\[
\|xw(t)\|_{L^2} = \|\tfrac{x}{2\Gamma(t)}u(t)\|_{L^2} \lesssim |\Gamma(t)|^{-1} \|J_\Gamma(t) u(t)\|_{L^2} + \|\nabla u(t)\|_{L^2}\lesssim t^{\delta}\|u(t)\|_X. 
\]

Our primary goal is to estimate $w$ in $L_x^\infty$. Note that by the Sobolev embedding $H^1\subset L^\infty$, we know already that
\[
\|w(t)\|_{L^\infty} \lesssim \|w(t)\|_{H^1} \lesssim t^{\delta}\|u(t)\|_X. 
\]
Furthermore, we can obtain a suitable estimate for the high frequencies of $w$ as follows: by Bernstein estimates (see Lemma~\ref{L:Bernstein}), 
\begin{align*}
\|P_{>\sqrt{t}}w(t)\|_{L^\infty} & \lesssim \sum_{N>\sqrt{t}}\|P_N w(t)\|_{L^\infty} \\
& \lesssim \sum_{N>\sqrt{t}}N^{-\frac12}\|\nabla w(t)\|_{L_x^2} \\
& \lesssim \sum_{N>\sqrt{t}} N^{-\frac12}t^{\delta}\|u(t)\|_X \lesssim t^{-\frac14+\delta}\|u(t)\|_X,
\end{align*}
where the sum is restricted to $N\in 2^{\mathbb{Z}}$.  It therefore remains to estimate the low frequency component. 

To this end, let us define
\[
\tilde w := P_{\leq \sqrt{t}} w
\]
and consider the equation satisfied by $\tilde w$.  Noting that the frequency projection is time-dependent, we compute 
\begin{equation}\label{w-eqn}
\begin{aligned}
i\partial_t \tilde w +[2\Gamma(t)]^{-1}|\tilde w|^2 \tilde w & = -[2\Gamma(t)]^{-2}\gamma(t)\Delta \tilde w - \tfrac{1}{2}t^{-\frac{3}{2}}\tilde P_{\sqrt{t}}\nabla w \\
& \quad \quad -[2\Gamma(t)]^{-1}[P_{\leq\sqrt{t}}(|w|^2 w) - |\tilde w|^2 \tilde w], \\
\end{aligned}
\end{equation}
where $\tilde P_{\sqrt{t}}$ is the Fourier multiplier operator with symbol $\psi'(\tfrac{\cdot}{\sqrt{t}})$. Here we recall that $\psi$ is the multiplier appearing in the definition of  the Littlewood--Paley projection operators; in particular, $\psi'(\tfrac{\cdot}{\sqrt{t}})$ is supported near frequencies $\sqrt{t}$. 

We will be able to obtain integrable estimates for the terms on the right-hand side of \eqref{w-eqn}.  To deal with the non-integrable term on the left-hand side, we introduce the unimodular integrating factor
\[
B(t) = \exp\biggl\{-i\int_{T_0}^t |\tilde w(s)|^2\tfrac{ds}{2\Gamma(s)}\biggr\},\qtq{and set} g(t) = B(t)\tilde w(t). 
\]
Then $g$ solves
\begin{align*}
i\partial_t g & = B(t)\bigl\{ -[2\Gamma(t)]^{-2}\gamma(t)\Delta\tilde w - \tfrac12 t^{-\frac32}\tilde P_{\sqrt{t}} \nabla w \\
& \quad\quad\quad - [2\Gamma(t)]^{-1}[P_{\leq \sqrt{t}}(|w|^2 w) - |\tilde w|^2 \tilde w]\bigr\}
\end{align*}

We now estimate the right-hand side of this equation in $L_x^\infty$.

First, the linear terms are controlled using Bernstein estimates (Lemma~\ref{L:Bernstein}): 
\begin{align*}
t^{-2}\|\Delta \tilde w\|_{L_x^\infty} + t^{-\frac32}\|\tilde P_{\sqrt{t}}\nabla w\|_{L_x^\infty} \lesssim t^{-\frac54} \|\nabla w\|_{L_x^2} \lesssim t^{-\frac54+\delta}\|u\|_X. 
\end{align*}
For the nonlinear term, we begin by writing
\begin{align*}
\Gamma(t)^{-1}\| P_{\leq \sqrt{t}}(|w|^2 w)-|\tilde w|^2 \tilde w \|_{L_x^\infty} & \lesssim t^{-1}\|P_{>\sqrt{t}}(|w|^2 w)\|_{L_x^\infty} \\
& \quad + t^{-1} \| |w|^2 w - |\tilde w|^2 \tilde w\|_{L_x^\infty}. 
\end{align*}
Using Bernstein estimates as above, we first obtain
\begin{align*}
t^{-1} \|P_{>\sqrt{t}}(|w|^2 w)\|_{L_x^\infty}  \lesssim t^{-\frac54}\|\nabla(|w|^2 w)\|_{L_x^2} & \lesssim t^{-\frac54} \|w\|_{L^\infty}^2 \|\nabla w\|_{L_x^2} \\
&\lesssim t^{-\frac54+3\delta}\|u\|_X^3. 
\end{align*}
We next observe\footnote{The $\O$ notation means that we can write the expression on the left-hand side as a finite linear combination of terms of the form on the right, up to additional frequency projections and complex conjugation.} that 
\[
|w|^2 w - |\tilde w|^2 \tilde w = \O(w^2 P_{>\sqrt{t}}w),
\] 
so that (by the high frequency estimate above) 
\begin{align*}
\Gamma(t)^{-1}\| |w|^2 w - |\tilde w|^2 \tilde w\|_{L_x^\infty} \lesssim t^{-1}\|w\|_{L_x^\infty}^2 \|P_{>\sqrt{t}} w\|_{L_x^\infty} & \lesssim t^{-\frac54+3\delta}\|u\|_X^3. 
\end{align*}

Noting that $\|g\|_{L_x^\infty}\equiv \|w\|_{L_x^\infty}$ and recalling that $0<\delta\ll 1$, it follows that
\begin{equation}\label{dispersive-type}
\begin{aligned}
\|w(t)\|_{L_x^\infty} & \lesssim \|w(T_0)\|_{L_x^\infty} + \|P_{>\sqrt{t}}w(t)\|_{L_x^\infty} \\
& \quad + \int_{T_0}^t s^{-\frac54+\delta}\|u(s)\|_X + s^{-\frac54+3\delta}\|u(s)\|_X^3\,ds \\
& \lesssim \eps + \|u(t)\|_X + \|u(t)\|_X^3,
\end{aligned}
\end{equation}
which (recalling that $\|w(t)\|_{L^\infty}\sim t^{\frac12}\|u(t)\|_{L^\infty}$) is the second bootstrap estimate we need.  

In particular, for $\eps$ and $\delta$ sufficiently small, a continuity argument now implies that
\[
\|u(t)\|_S+\|u(t)\|_X \lesssim \eps \qtq{for all}t\geq 0,
\]
yielding the desired decay for the solution $u$. For the sake of completeness, we include the details of the continuity argument in Appendix~\ref{S:A}.

It remains to demonstrate that $u$ has the asymptotic behavior described in \eqref{asymptotic}. 

To begin, note that with the estimates in hand, we can see immediately that
\[
\|g\|_{L_{t,x}^\infty} \lesssim \eps,\quad \|\partial_t g(t)\|_{L_x^\infty} \lesssim \eps t^{-\frac54+3\delta},\qtq{and}\|w(t)-\tilde w(t)\|_{L_x^\infty} \lesssim \eps t^{-\frac14+\delta}. 
\]
Thus we may define $W_0=\lim_{t\to\infty} g(t)$ (with the limit taken in $L_x^\infty$) and derive that
\begin{equation}\label{wwB1}
\|w(t) - B(t)^{-1}W_0\|_{L_x^\infty} \lesssim \eps t^{-\frac14+3\delta} 
\end{equation}
for all large $t$. 

Now we wish to extract the leading order behavior of the phase in $B(t)^{-1}$.  Recalling $|\tilde w|=|g|$, let us define $\Psi(t)$ via
\begin{equation}\label{phase-main}
\int_{T_0}^t |g(s)|^2 \tfrac{ds}{2\Gamma(s)} = \tfrac{1}{2\langle\gamma\rangle}|g(t)|^2\log(\tfrac{t}{T_0})+\Psi(t). 
\end{equation}

We will show that $\Psi$ is Cauchy in $L^\infty$.  To this end, we write
\begin{align}
\Psi(t)-\Psi(s) &= \int_s^t\bigl[|g(\tau)|^2-|g(t)|^2\bigr]\tfrac{d\tau}{2\langle\gamma\rangle \tau}\label{PCauchy1} \\
& \quad + \int_s^t |g(\tau)|^2\bigl[\tfrac{1}{2\Gamma(\tau)}-\tfrac{1}{2\langle\gamma\rangle\tau}\bigr]\,d\tau \label{PCauchy2} \\
& \quad - \tfrac{1}{2\langle\gamma\rangle}\bigl[|g(t)|^2 -|g(s)|^2]\log(\tfrac{s}{T_0}). \label{PCauchy3}  
\end{align}
Using the bounds on $g$ and $\partial_t g$, we find that
\[
|\eqref{PCauchy1}| \lesssim \eps^2 s^{-\frac14+3\delta}.
\]
Next, using $|\Gamma(\tau)-\langle\gamma\rangle \tau|\lesssim 1$ (cf. \eqref{Gammalemma}), we find
\[
|\eqref{PCauchy2}| \lesssim \eps^2 s^{-1}.
\]
Finally,
\[
|\eqref{PCauchy3}| \lesssim \eps^2 s^{-\frac14+3\delta}\log s.
\]
It follows that $\Psi(t)$ is Cauchy and hence converges to some $\Phi$ in $L^\infty$ as $t\to\infty$, with
\[
\|\Psi(t)-\Phi\|_{L_x^\infty} \lesssim \eps^2 t^{-\frac14+4\delta}.
\]

Consequently, we obtain from \eqref{wwB1} and \eqref{phase-main} that 
\[
\|w(t) - \exp\{\tfrac{i}{2\langle\gamma\rangle}|g(t)|^2 \log(\tfrac{t}{T_0})+i\Phi\}W_0\|_{L_x^\infty} \lesssim \eps t^{-\frac14+4\delta}.
\]
Defining
\[
W= \exp\{i[\Phi - \tfrac{1}{2\langle\gamma\rangle}|W_0|^2\log(T_0)]\}W_0,
\]
we derive
\[
\|w(t) - \exp\{\tfrac{i}{2\langle\gamma\rangle}|W|^2\log t\}W\|_{L_x^\infty} \lesssim t^{-\frac14+4\delta}. 
\]

Recalling the definition of $w(t)$ in \eqref{def-w}, we obtain the asymptotic formula
\[
u(t,x) = (2i\Gamma(t))^{-\frac12}\exp\bigl\{\tfrac{ix^2}{4\Gamma(t)}+\tfrac{i}{2\langle\gamma\rangle}|W(\tfrac{x}{2\Gamma(t)})|^2\log t\bigr\}W(\tfrac{x}{2\Gamma(t)})+\mathcal{O}(t^{-\frac34+4\delta})
\]
in $L_x^\infty$ as $t\to\infty$, which is a quantitative form of the asymptotic formula \eqref{asymptotic} appearing in Theorem~\ref{T1}.  Thus the proof of Theorem~\ref{T1} is complete.

\appendix

\section{Continuity argument}\label{S:A}

This section contains the details of the continuity argument used in Section~\ref{S:main}; in particular, we import all notation from that section.

Given $\eps>0$, we firstly obtain that $\|u(t)\|_S,\|u(t)\|_X\leq C_0\eps$ on $[0,T_0]$ for some universal $C_0=C_0(\gamma)>0$.  Thus it suffices to establish bounds on $[T_0,\infty)$.  We let
\[
I=\{t\geq T_0: \|u(t)\|_S \leq C_1 \eps \qtq{and}\|u(t)\|_X\leq C_2\eps\},
\]
and we will show that for $C_1,C_2$ large enough and $\eps>0$ small enough (depending on universal constants), we have $I=[T_0,\infty)$. Noting that $I\ni T_0$ for $C_1,C_2\geq C_0$ and that $I$ is closed by continuity of the flow, it suffices to prove that $I$ is open.

To this end, suppose $t\in I$. It follows from the energy estimates \eqref{energy-type1}--\eqref{energy-type2} that
\[
\|u(t)\|_{L^2}+ \|\nabla u(t)\|_{L^2} + \|J_\Gamma(t)u(t)\|_{L^2} \leq \tilde C_1\eps\langle t\rangle^{cC_1^2 \eps^2}
\]
for some universal $\tilde C_1, c>0$.  This implies
\[
\|u(t)\|_X \leq \tfrac12 C_2\eps
\]
provided we can arrange 
\begin{equation}\label{continuity1}
cC_1^2 \eps^2 <\delta \qtq{and} C_2\geq 2\tilde C_1
\end{equation}
(recall that $\delta$ is a small but fixed parameter, e.g. $\delta=\tfrac{1}{100}$ would suffice). 

On the other hand, it follows from the dispersive-type estimate \eqref{dispersive-type} that 
\[
\|u(t)\|_S\leq \tilde C_2 [(C_0+C_2)\eps + (C_2\eps)^3]
\]
for some universal $\tilde C_2>0$. This implies
\[
\|u(t)\|_S \leq \tfrac12 C_1 \eps
\]
provided we can arrange 
\begin{equation}\label{continuity2}
C_1\geq 2\tilde C_2[C_0+C_2+C_2^3\eps^2]. 
\end{equation}

In particular, we can arrange both \eqref{continuity1} and \eqref{continuity2} by first choosing $C_2=\max\{C_0,2\tilde C_1\}$ and $C_1=2\tilde C_2[C_0+C_2+C_2^3]$, and ensuring that $\eps=\eps(c,C_0,\delta)$ is sufficiently small.

Applying continuity of the flow once again, we find that $(t-\eta,t+\eta)\subset I$ for some $\eta>0$.  Thus $I$ is open as well, and hence $I=[T_0,\infty)$.


\end{document}